\theoremstyle{definition}
\newtheorem{thm}{Theorem}[section]
\newtheorem{lem}[thm]{Lemma}
\newtheorem{cor}[thm]{Corollary}
\newtheorem{defn-lem}[thm]{Definition-Lemma}
\newtheorem{prop}[thm]{Proposition}
\newtheorem{rem}[thm]{Remark}
\newtheorem{defn}[thm]{Definition}
\numberwithin{equation}{section}
\def \Q{{\mathbb Q}}
\def \N{{\mathbb N}}
\def \C{{\mathbb C}}
\def \Z{{\mathbb Z}}
\def \P{\mathbb P}
\def\map#1.#2.{#1 \longrightarrow #2}
\def\rmap#1.#2.{#1 \dasharrow #2}
\DeclareMathOperator{\rank}{rank}
\DeclareMathOperator{\Pic}{Pic}
\DeclareMathOperator{\Spec}{Spec}
\def\fb#1.{\underset #1 \to \times}
\def\pr#1.{\Bbb P^{#1}}
\def\ring#1.{\mathcal O_{#1}}
\def\mlist#1.#2.{{#1}_1,{#1}_2,\dots,{#1}_{#2}}
\def\uloopr#1{\ar@'{@+{[0,0]+(-4,5)} @+{[0,0]+(0,10)}
@+{[0,0]+(4,5)}}
  ^{#1}}
\def\dloopr#1{\ar@'{@+{[0,0]+(-4,-5)} @+{[0,0]+(0,-10)}
@+{[0,0]+(4,-5)}}
  _{#1}}
\def\rloopd#1{\ar@'{@+{[0,0]+(5,4)} @+{[0,0]+(10,0)}
@+{[0,0]+(5,-4)}}
  ^{#1}}
\def\lloopd#1{\ar@'{@+{[0,0]+(-5,4)} @+{[0,0]+(-10,0)}
@+{[0,0]+(-5,-4)}}
  _{#1}}
\long\def\ignore#1{}
\long\def\ignore#1{#1}
\title{Generic ordinarity for Semi-Stable Fibrations}
\author{Junmyeong Jang}
\date{email : jang3@math.purdue.edu}
\begin{document}

\maketitle
\begin{center}
Mathematics Subject Classification : 11G25, 14J20
\end{center}

\medskip
     \section{Introduction}
     Let $k$ be a field and $\pi : X \to C$ be a semi-stable fibration of a smooth proper surface over $k$
     to a proper smooth curve over $k$. If $k$ is a subfield of the field of complex numbers, $\C$,
     the following semi-positivity theorem holds.
     \\ \\
     $\bf{Theorem.}$(Semi-Positivity Theorem, Xiao) If $\pi :X \to C$ is a fibration of a proper smooth surface to a proper smooth
     curve over $\C$, then all the quotient bundles of $\pi _{*}\omega _{X/C}$ are of non-negative degree.
     \cite{X},p.1\\ \\
     In other words, all the Harder-Narasimhan slopes of $\pi _{*} \omega _{X/C}$ are non-negative, or equivalently
     all the Harder-Narasimhan slopes of $R^{1} \pi _{*} \mathcal{O}_{X}$ are non-positive.
     But over a positive characteristic filed, the semi-positivity theorem does not hold in general.
     Moret-Bailly constructed a non-isotrivial semi-stable fibration of fiber genus 2, $\pi _{M} : X_{M} \to \P ^{1}$
     such that
     $$R^{1} \pi _{M*} \mathcal{O}_{X_{M}} = \mathcal{O}(-p) \oplus \mathcal{O}(1).\textrm{\cite{MB},p.137}$$
     Here $p$ is the characteristic of the base field.
     $X_{M}$ is a
     theta divisor of a principal polarization of a non-isotrivial supersingular abelian surface over $\P ^{1}$.
     Hence every special fiber of $\pi _{M}$ is either a supersingular smooth curve of genus 2 or a union of two supersingular
     elliptic curves which intersect at a point transversally. We can see the $p$-rank of the generic fiber of
     $\pi _{M}$ is 0.
     The main work of this paper is to prove the semi-positivity theorem for a semi-stable fibration over
     a field of positive characteristic provided with the $p$-rank of the generic fiber is maximal
     or equivalently the fibration is generically ordinary.
     Precisely,
     \newpage
     $\bf{Theorem \ 1.}$
     If $\pi: X \to C$ is generically ordinary semi-stable fibration, then
     \begin{itemize}
     \item[(a)] $\dim H^{0}(R^{1} \pi _{*} \mathcal{O} _{X}) = \dim H^{0} (R^{1} \pi^{p^{n}} _{*} \mathcal{O}_{X^{p^{n}}})$ and $ \dim H^{1}(\mathcal{O}_{X})= \dim H^{1}(\mathcal{O}_{X^{p^{n}}})$ for any $n$.
     \item[(b)] All the Harder-Narasimhan slopes of $R^{1}\pi _{*}(\mathcal{O}_{X})$ are
     non-positive.
     \end{itemize}
     Here $\pi ^{p^{n}} : X^{p^{n}} \to C$ is the base change of $\pi : X \to C$ by the
     $n$-iterative Frobenius
     morphism of $C$, $F^{n}_{C} : C \to C$.
     The proof of the theorem is given in section 2.
     Let me summarize the idea of the proof of the theorem.
     When
     $F_{X/C} : X \to X^{p}$ is the relative Frobenius morphism,
     $F^{*}_{X/C} : \mathcal{O}_{X^{p}} \to
     F_{X/C} ^{*} \mathcal{O}_{X}$ is injective and the cokernel of this morphism is
     denoted by $B^{1} \omega _{X/C}$ or $B^{1} \omega$. $B^{1}\omega$ is flat over
     $\mathcal{O}_{C}$. If $\pi$ is generically ordinary,
     $H^{0}(B^{1} \omega | X_{s})=H^{1}(B^{1} \omega | X_{s})=0$ for almost all $s \in
     C$, hence $\pi ^{p} _{*} B^{1} \omega=0$
     and $R^{1} \pi ^{p}_{*} B^{1} \omega$ is torsion because $\pi$
     is relatively 1-dimensional. Considering the exact sequence of coherent $\mathcal{O}_{C}$-modules,
     $$ \to \pi ^{p} _{*} B^{1} \omega \to R^{1} \pi _{*} \mathcal{O} _{X} \to F_{C}^{*}
     R^{1} \pi _{*} \mathcal{O} _{X} \to R^{1} \pi ^{p} _{*} B^{1} \omega \to 0$$
     $F_{C}^{*} R^{1} \pi _{*} \mathcal{O}_{X}$ is a subbundle of $R^{1} \pi _{*} \mathcal{O}_{X}$, so
     $\dim H^{0}(R^{1} \pi _{*} \mathcal{O} _{X}) \geq \dim H^{0} (F_{C} ^{*} R^{1} \pi _{*} \mathcal{O}_{X})$.
     But also we have
     $\dim H^{0}( R^{1} \pi _{*} \mathcal{O} _{X}) \leq \dim H^{0} (F_{C} ^{*} R^{1} \pi _{*} \mathcal{O}_{X})$.
     Hence
     $\dim H^{0}( R^{1} \pi _{*} \mathcal{O} _{X}) = \dim H^{0} (F_{C} ^{*} R^{1} \pi _{*} \mathcal{O}_{X})$.
     And by the Leray spectral sequnece,
     $ \dim H^{1}(\mathcal{O}_{X})= \dim H^{1}(\mathcal{O}_{X^{p}})$.
     Repeating this argument we can show
     $$\dim H^{0}(R^{1}\pi _{*} \mathcal{O}_{X} = \dim H^{0}(R^{1} \pi ^{p} _{*} \mathcal{O}_{X^{p^{n}}})
     \textrm{ and } \dim H^{1}(\mathcal{O}_{X}) = \dim H^{1}(\mathcal{O}_{X^{p^{n}}}).$$
     The part (b) follows the part (a) and the Rieman-Roch theorem.
     This is a somewhat interesting phenomenon in a sense that it relates the ordinarity, a Galois theoretic condition,
     to a numerical property of slopes of vector bundles.

     In section 3, as an application of of the main theorem,
     we will construct a counterexample of Parshin's expectation on the Miyaoka-Yau inequality.
     Parshin thought that the failure of the Miyaoka-Yau inequality is related to the non-smoothness of
     the Picard scheme, so he conjectured that a version of the Miyaoka-Yau inequality holds if the Picard scheme
     of a given surface of general type is smooth. \cite{PA},p.288 We will construct a smooth
     proper surface of general type over a finite field whose
     Picard scheme is smooth and $c_{1} ^{2} > M c_{2}$ for any $M >0$.

     $\bf{Acknowledgments}$

     I deeply appreciate to Prof.M.Kim.
     Without his sincere support and instruction, this work would have been impossible.
     It's pleasure to thank Prof.K.Joshi and Prof.L.Illusie
     for many helpful advice and encouragements. I also thanks to Prof.F.Oort, Prof.C.Chai, Prof.D.Arapura and
     Dr.D-U.Lee for answers to questions, comments and conversations.
     \newpage
     \section{Definitions and the Proof of the Main Theorem}
     Let $k$ be an algebraically closed field and $C$ be a projective curve over $k$.
     \begin{defn}
     $C$ is (semi-)stable if
     \begin{itemize}
     \item[1.] It is connected and reduced.
     \item[2.] All the singular points are normal crossing.
     \item[3.] An irreducible component, which is isomorphic to $\P^{1}$, meets other components
     in at least 3(resp. 2) points.
     \end{itemize}
     \end{defn}
     For an arbitrary base scheme, we define a (semi-)stable curve as follows.
     \begin{defn}
     A proper flat morphism of relative dimension 1 of schemes
     \\ $\pi : X \to S$ is a (semi-)stable curve
     if
     every geometric fiber of $\pi$ is
     a (semi-)stable curve in the sense of definition 2.7.
     \end{defn}
     In this paper, we are mainly concerned with generically smooth semi-stable curves
     over a proper smooth curve over a field. If $\pi : X \to C$ is such a semi-stable
     fibration over an algebraically closed field $k$, $X$ is a proper surface over $k$ and
     the singular points of $X$ are isolated. A singularity of $X$ is \'{e}tale locally isomorphic to
     $$k[x,y,t]/(t^{n}-xy).$$
     If  $\tilde{X} \to X$ is the minimal blow up of these singularities, the composition
     $\tilde{\pi} : \tilde{X} \to C$ is also a semi-stable fibration.\cite{DE},p.4 Moreover $\omega ^{1} _{\tilde{\pi}/C}$
     is isomorphic to the pull back of $\omega ^{1} _{X/C}$ by the blow up and
     $\tilde{\pi} _{*} \omega ^{1} _{\tilde{X}/C}
     = \pi _{*} \omega ^{1} _{X/C}$.\cite{SZ1},p.171
     Hence for many purpose, we may assume that $X$ is a smooth surface over $k$.
     From now on, we assume $X$ is a smooth surface over $k$ and $\pi : X \to C$ is a
     generically smooth semi-stable fibration unless it is stated otherwise.
     \begin{defn}
     A semi-stable fibration $\pi : X \to C$ is isotrivial, if all the special fibers of $\pi$ are isomorphic.
     \end{defn}
     In particular, an isotrivial fibration $\pi : X \to C$ is a smooth fibration. If $\pi$
     is isotrivial, there exists a finite \'{e}tale cover $C' \to C$ such that
     the base change $\pi _{C'} : X \times_{C} C' \to C'$
     is trivial. In particular, if $\pi$ is isotrivial, $\deg \pi _{*} \omega_{X/C}=0$.
     \begin{prop}
     (Szpiro) If $\pi$ is a non-isotrivial semi-stable fibration, \mbox{$\deg \pi _{*} \omega _{X/C} > 0$}.
     Equivalently, $\deg R^{1} \pi _{*} \mathcal{O}_{X} < 0$.\cite{SZ1},p.173
     \end{prop}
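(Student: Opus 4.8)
The plan is to reduce the statement to a Noether-type identity for semistable fibrations, to exploit the nefness of $\omega_{X/C}$, and to extract the strict inequality from non-isotriviality through the equality case. First, the two assertions are equivalent: since $\pi$ is flat with geometrically connected, reduced, Gorenstein (nodal) fibers of arithmetic genus $g$, relative duality gives $R^1\pi_*\mathcal{O}_X\cong(\pi_*\omega_{X/C})^\vee$, and cohomology and base change shows both are locally free of rank $g$, so $\deg R^1\pi_*\mathcal{O}_X=-\deg\pi_*\omega_{X/C}$; it thus suffices to prove $\deg\pi_*\omega_{X/C}>0$. We may assume the fiber genus satisfies $g\ge 1$ (for $g=0$ one has $\pi_*\omega_{X/C}=0$, and every fiber of a semistable genus-$0$ fibration is a smooth $\P^1$, so $\pi$ is isotrivial and there is nothing to prove). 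Writing $b$ for the genus of $C$ and $\delta\ge 0$ for the total number of nodes in the fibers of $\pi$, I would combine four standard facts: (i) $\deg\pi_*\omega_{X/C}=\chi(\mathcal{O}_X)-(b-1)(g-1)$, from the Leray spectral sequence and the duality above; (ii) Noether's formula $12\chi(\mathcal{O}_X)=\omega_X^2+c_2(X)$, valid in every characteristic; (iii) $\omega_X^2=\omega_{X/C}^2+8(b-1)(g-1)$, from $\omega_X=\omega_{X/C}\otimes\pi^*\omega_C$ together with $(\pi^*\omega_C)^2=0$ and $\omega_{X/C}\cdot\pi^*\omega_C=(2b-2)(2g-2)$; and (iv) $c_2(X)=4(b-1)(g-1)+\delta$, from additivity of the $\ell$-adic Euler number over the fibers, each node contributing $1$. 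Together these give
\[
12\,\deg\pi_*\omega_{X/C}=\omega_{X/C}^2+\delta .
\]
As $\delta\ge 0$, it remains to show $\omega_{X/C}^2\ge 0$, and that $\omega_{X/C}^2+\delta>0$ when $\pi$ is non-isotrivial.

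\textbf{Nefness of $\omega_{X/C}$.} For vertical curves this is elementary and is exactly where semistability enters: $\omega_{X/C}|_{X_s}\cong\omega_{X_s}$, and for a component $\Gamma$ of the nodal curve $X_s$ with normalization $\Gamma^\nu$ one has $\deg_\Gamma\omega_{X_s}=2g(\Gamma^\nu)-2+\#S$, where $\#S$ counts the branches of $X_s$ through the nodes lying on $\Gamma$; condition $3$ of Definition $2.7$ forces $\#S\ge 2$ when $g(\Gamma^\nu)=0$, so this quantity is $\ge 0$ in all cases, and $\omega_{X/C}\cdot F=2g-2\ge 0$ on a general fiber. For a horizontal irreducible curve $D$, one passes to the normalization $D'$ of $D$, base-changes $\pi$ along $D'\to C$ and normalizes (preserving semistability and, up to vertical corrections, pulling back $\omega_{X/C}$); by the projection formula this reduces to a semistable fibration equipped with a section $\sigma$, and adjunction on the smooth curve $\sigma(D')$ rewrites $\omega_{X/C}\cdot D\ge 0$ as $\sigma(D')^2\le 0$. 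Establishing this bound --- equivalently, the full nefness of $\omega_{X/C}$ --- is the main obstacle: over $\C$ it is subsumed by Xiao's semipositivity theorem quoted in the introduction (take the whole bundle as a quotient of itself), but in positive characteristic one must instead run Szpiro's argument for semistable fibrations \cite{SZ1}, which is precisely why the semistable hypothesis appears in the statement. Granting nefness, $\omega_{X/C}^2\ge 0$, hence $\deg\pi_*\omega_{X/C}\ge 0$.

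\textbf{The equality case.} If $\deg\pi_*\omega_{X/C}=0$, the displayed identity forces $\delta=0$, so $\pi$ is smooth, and $\omega_{X/C}^2=0$. A Riemann--Roch computation on $X$ (using $R^1\pi_*\omega_{X/C}^{\otimes n}=0$ for $n\ge 2$) then yields $\deg\pi_*\omega_{X/C}^{\otimes n}=0$ for all $n$, so the relative pluricanonical algebra is numerically trivial; by the Arakelov-type rigidity for smooth families with $\omega_{X/C}^2=0$ (the case $g=1$ being the classical constancy of the $j$-invariant), $\pi$ is isotrivial. Contrapositively, a non-isotrivial semistable $\pi$ has $\deg\pi_*\omega_{X/C}>0$, i.e.\ $\deg R^1\pi_*\mathcal{O}_X<0$. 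The two genuinely hard inputs are the horizontal case of the nefness of $\omega_{X/C}$ and this rigidity statement; everything else is bookkeeping with relative duality and Noether's formula.
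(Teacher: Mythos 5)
The paper gives no proof of this proposition at all: it is quoted as Szpiro's theorem with a reference to \cite{SZ1}, p.173, and is used later as a black box (e.g.\ to get $d=-\deg R^{1}\pi_{*}\mathcal{O}_{X}>0$ in Section 3). So there is no internal argument to compare yours with; the only fair comparison is whether your text constitutes an independent proof, and it does not. Your bookkeeping is fine --- the duality $R^{1}\pi_{*}\mathcal{O}_{X}\cong(\pi_{*}\omega_{X/C})^{\vee}$, the identity $12\deg\pi_{*}\omega_{X/C}=\omega_{X/C}^{2}+\delta$ from Noether's formula and the $\ell$-adic Euler number count, and the vertical nefness computation using condition 3 of the semistability definition are all correct. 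But the two steps that carry the entire content of the proposition are left as external inputs: the horizontal case of nefness of $\omega_{X/C}$ in characteristic $p$, for which you say one must ``run Szpiro's argument'' --- i.e.\ you cite the very source whose theorem you are proving --- and the equality case, where you invoke an unproved ``Arakelov-type rigidity for smooth families with $\omega_{X/C}^{2}=0$'' in positive characteristic. That rigidity statement is essentially equivalent to the proposition itself (Szpiro's paper is precisely about the Parshin--Arakelov rigidity theorem in characteristic $p$), so the argument is circular as a standalone proof: you have reduced ``non-isotrivial $\Rightarrow \deg>0$'' to ``non-isotrivial smooth with $\omega_{X/C}^{2}=0$ is impossible,'' which is the same assertion after your numerical identity.

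Two smaller points. Appealing to Xiao's semipositivity is only a remark about characteristic $0$ and is unavailable here: the whole point of this paper is that semipositivity fails in characteristic $p$ (Moret-Bailly's example), and the proposition is needed exactly in that setting. And your reduction of the horizontal nefness to the case of a section via base change along the normalization $D'\to C$ is not routine: the pulled-back family need not be semistable over the ramification points, and the comparison between $\omega_{X/C}$ pulled back and the relative dualizing sheaf of a semistable model acquires vertical correction terms whose signs must be controlled --- you wave at this (``up to vertical corrections'') but it is part of the hard step, not a side remark. In short: as a proof the proposal has a genuine gap (the strict positivity is never established from first principles); as a treatment matching the paper, the honest counterpart of what the paper does is simply the citation to \cite{SZ1}.
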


     Now assume that $k$ is a perfect field of positive characteristic $p$ and that $X$ is a smooth proper variety defined over $k$.
     We have the Frobenius
     diagram for $X/k$,
     $$\begin{diagram}
     X & \rTo ^{F_{X/k}} & X^{p} &
     \rTo & X \\
     & \rdTo & \dTo & & \dTo \\
     & & k & \rTo ^{F_{k}} & k.
     \end{diagram}$$
     Here $F _{X/k}$ is the relative Frobenius morphism of $X/k$.
     When $\Omega ^{\cdot} _{X/k}$ is the DeRham complex of $X/k$,
     $F_{X/k *}(\Omega^{\cdot}_{X/k})$ is an $\mathcal{O}
     _{X^{p}}$-linear complex of coherent $\mathcal{O}_{X^{p}}$-modules. The image of $F_{X/k *}
     \Omega^{i-1} _{X/k} \to F_{X/k *} \Omega^{i} _{X/k}$ is
     denoted by $B^{i} \Omega _{X/k}$ or $B^{i}\Omega$.  Each $B^{i}\Omega$ is a vector
     bundle on $X^{p}$.
     \defn $X$ is ordinary (Bloch-Kato ordinary) if $H^{i}(B^{j}\Omega_{X/k})=0$ for all
     $i$ and $j$. \\

     There are many equivalent conditions to Bloch-Kato
     ordinarity\cite{IR},p.209.
     If $X$ is a curve or an abelian
     variety, $X$ is ordinary if and only if it satisfies the classical definition,
     that the
     order of $p$-torsion of the $\Pic _{X/k}^{0}$ is maximal or
     that the Frobenius morphism on $H^{1}(\mathcal{O}_{X})$ is bijective.
     If all the integral crystalline cohomologies of $X$, $H^{i}_{cris}(X/W)$
     are torsion free, $X$ is ordinary if and only if
     the Newton polygons of $X$ are equal to the Hodge polygons of $X$ for
     all degrees. Here $W$ is the ring of Witt vectors of $k$.

     We can extend the definition of ordinarity to any proper smooth morphism
     of schemes of characteristic $p$. Assume $f: X\to S$ be a proper and
     smooth morphism.
     And let $X^{p}= X \times _{S} (S,F_{S})$ and $F_{X/S} :
     X \to X^{p}$ be the relative Frobenius morphism. The image $B^{i}_{X/S}$ of $F_{X/S *}
     \Omega ^{i-1}_{X/S} \overset{d}{\to} F_{X/S *}\Omega ^{i} _{X/S}$ is a vector bundle on $X^{p}$. We define
     $X/S$ to be ordinary if $R^{i}f_{*} (B^{j}_{X/S})=0$ for all $i$ and $j$.

     Moreover, the notion of ordinarity can be extended to
     a proper generically smooth morphism to a $\Spec$ of a discrete valuation ring
     with normal crossing on the special fiber. We recall the definition of ordinarity
     for such a morphism from \cite{I2} and \cite{I3}.
     Let $A$ be a discrete valuation ring
     of positive characteristic and $S = \Spec A$. $s \in S$ is the closed point.

     \defn
     $f:X \to S$ is locally semi-stable if it is isomorphic to
     $$\Spec A[x_{1}, \cdots ,
     x_{n}]/(x_{1} \cdots x_{r} -t) \to \Spec A$$
      \'{e}tale locally at a relative singular point, where $t$ is a unifomizer of
     $A$. \\ \\
     The term ``locally semi-stable morphism" is not conventional.
     Usually such a morphism is just called semi-stable.
     Here we introduce this definition
     to avoid a conflict with the former definition of semi-stable curve.
     Note that the definition of semi-stable curve is little different from
     that of a locally semi-stable morphism. In the definition of semi-stable curve, it is required that the semi-stable
     fibration is proper and relatively minimal while in definition 2.6, the morphism should
     be generically smooth.
     However if $X \to
     C$ is a generically smooth relative semi-stable curve over a smooth curve, $X
     \otimes \mathcal{O}_{C,s} \to \Spec \mathcal{O}_{C,s}$ is locally semi-stable
     as per definition 2.6 for each $s \in C$.

     Let $X \to S$ be a locally semi-stable morphism and $U \subset X$ be
     the relative smooth locus and $u: U \hookrightarrow X$
     be the inclusion. Then $X \setminus U$ is of codimension at least 2,
     hence $\omega^{\cdot} _{X/S} = u_{*} \Omega^{\cdot} _{U/S}$ is a complex of locally
     free sheaves on $X$ and $\omega ^{i}_{X/S} = \wedge ^{i}
     \omega ^{1} _{X/S}$. When $X$ is given as $A[x_{1}, \cdots ,
     x_{n}]/( x_{1} \cdots x_{r}-t)$ \'{e}tale locally, $\omega ^{1}
     _{X/S}$ is the free module of rank $n-1$, generated by
     $$dx_{1} / x_{1},
     \cdots , dx_{r} /x_{r} , dx_{r+1}, \cdots , dx_{n}$$ with the
     relation $$ \sum _{i=1} ^{r} dx_{i} / x_{i} =0.$$
     Note that if
     the relative dimension of $f$ is $d$, the highest wedge product $\omega
     ^{d} _{X/S}$ is the relative dualizing sheaf of $f:X \to S$.

     Let $X^{p}$ be the base change of $X$ by the Frobenius
     morphism of $S$ and \\ $F_{X/S} : X \to X^{p}$ be the relative
     Frobenius morphism. Then $F_{X/S *} \omega ^{\cdot} _{X/S}$ is an
     $\mathcal{O} _{X^{p}}$-linear complex. The image and the kernel of the differentials of the complex
     $F_{*}\omega ^{\cdot}_{X/S}$ are denoted by $B^{i}\omega _{X/S},$ and $Z^{i}\omega _{X/S}$ respectively,
     and $\mathcal{H} ^{i} \omega _{X/S} = Z^{i} \omega _{X/S} / B^{i} \omega _{X/S}$.
     $B^{i}\omega _{X/S}, Z^{i} \omega _{X/S}, \mathcal{H} ^{i} \omega _{X/S}$
     are $\mathcal{O}_{X^{p}}$-coherent sheaves
     and flat over $S$.
     The usual Cartier
     isomorphism
     $$\textrm{C}^{-1} : \Omega^{i} _{U^{p} /S} \to \mathcal{H}
     ^{i} F_{*} \Omega ^{\cdot} _{U/S}$$
     on the smooth locus extends to an isomorphism\cite{I2}p.381
     $$\textrm{C} ^{-1} : \omega ^{i} _{X^{p}/S} \to \mathcal{H} ^{i} F_{*}
     \omega ^{\cdot} _{X/S}.$$
     Here $\omega ^{i} _{X^{p}/S}=F_{C}^{*}(\omega ^{i} _{X/S})$.
      In particular, the Cartier
     isomorphism at $i=0$ gives an exact sequence
     $$ 0 \to \mathcal{O}_{X^{p}} \to F_{*} \mathcal{O}_{X} \to
     B^{1} \omega _{X/S} \to 0.$$
     \begin{defn}
     A proper locally semi-stable morphism $f:X \to S$ is ordinary if
     $H^{j}(B^{i} \omega _{X/S})=0$ for all $i,j$.
     \end{defn}
     Since $B^{i} \omega _{X/S}$ are flat over $S$, $f$ is ordinary if and only if
     $H^{j}(X_{s}, B^{i} \omega
     _{X/S}|X_{s})=0$ for all $i,j$ when $X_{s}$ is the special fiber. This
     definition depends on the entire $X \to S$, and not only on the
     special fiber. But if $f$ is smooth, $f$ is ordinary if and only if the special fiber is ordinary.
     Moreover if the relative dimension of $f$ is 1 and
     the residue field is perfect, $f$ is ordinary if and only if
     the Frobenius morphism on $H^{1}(\mathcal{O}_{X_{s}})$ is
     bijective, hence the ordinarity depends only on the special fiber.

     Since all the above arguments are local on the base $S$, they are
     still valid if we replace the base $S$ by a smooth curve over
     a perfect field of positive characteristic.
     Let $C$ be a smooth curve over a perfect field $k$ of positive characteristic and
     $\pi :X \to C$ be a proper generically smooth semi-stable curve.
     Since each $B^{i} \omega _{X/C}$ is flat over $C$,
     by the semi-continuity theorem, the set of points $s \in C$ satisfying the property that
     $X \otimes _{\mathcal{O}_{C}}
     \mathcal{O}_{s}$
     is ordinary, forms an
     open set in $C$.
     \begin{defn}
     Let $\pi :X \to C$ be a proper semi-stable curve. We say that $\pi$ is generically ordinary
     if at least one closed fiber of $\pi$ is ordinary. (Hence almost all closed
     fibers of $\pi$ are ordinary.)
     \end{defn}
     Now we recall the (semi-)stability and the Harder-Narasimhan
     slopes of a vector bundle on a smooth proper curve.
     Let $C$ be a smooth proper curve defined over an
     algebraically closed field $k$.
     For a vector bundle $V$ on $C$, the slope of $V$
     is defined as $s(V)= \deg V/ \rank V$. $V$ is called
     semi-stable (resp. stable) if for any proper subbundle $W$ of $V$, it is
     satisfied that $s(W) \leq s(V)$. (resp.
     $s(W) < s(V)$)
     \begin{prop}
     (Harder-Narasimhan)For any vector bundle $V$ on $C$, there exists a
     unique filtration of $V$ consisting of subbundles of $V$,
     $$0 = V_{0} \subset V_{1} \subset \cdots \subset V_{n} =V$$
     such that $V_{i}/V_{i-1}$ is a semi-stable vector bundle of slope $\lambda
     _{i}$ and $\lambda _{1} > \lambda _{2} > \cdots > \lambda
     _{n}$./cite{HN},p.220
     \end{prop}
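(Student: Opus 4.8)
The plan is to run the classical top-down argument. First I would show that
$\mu_{\max}(V) := \sup\{\, s(W) : 0 \neq W \subseteq V \text{ a subbundle}\,\}$
is finite and attained. For a rank-$r$ subbundle $W$, the inclusion $\det W \hookrightarrow \wedge^{r}V$ realizes $\deg W$ as the degree of a line subbundle of the fixed bundle $\wedge^{r}V$, and the degrees of line subbundles of a fixed bundle on $C$ are bounded above; since for each $r \le \rank V$ these degrees are integers, $\{\deg W : \rank W = r\}$ has a maximum, and the best of the finitely many resulting values shows $\mu_{\max}(V)$ is a maximum. Among the subbundles realizing $s(W) = \mu_{\max}(V)$ I would then pick one, $V_{1}$, of largest rank.

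Next I would check that $V_{1}$ is semistable and is the largest subbundle of maximal slope. Semistability is immediate: any subbundle $W \subseteq V_{1}$ is also a subbundle of $V$, so $s(W) \le \mu_{\max}(V) = s(V_{1})$. For the maximality, if $V_{1}'$ is another subbundle with $s(V_{1}') = \mu_{\max}(V)$, the exact sequence $0 \to V_{1} \cap V_{1}' \to V_{1} \oplus V_{1}' \to V_{1} + V_{1}' \to 0$ gives $\deg(V_{1} \cap V_{1}') + \deg(V_{1}+V_{1}') = \deg V_{1} + \deg V_{1}'$, and since passing to the saturation only raises the degree while $s \le \mu_{\max}(V)$ on every subbundle of $V$, all the relevant inequalities are forced to be equalities; in particular $\overline{V_{1}+V_{1}'}$ again has slope $\mu_{\max}(V)$, so by maximality of $\rank V_{1}$ we get $\overline{V_{1}+V_{1}'} = V_{1}$ and hence $V_{1}' \subseteq V_{1}$. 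Since $V_{1}$ is saturated, $V/V_{1}$ is a vector bundle of strictly smaller rank, so by induction on the rank (the rank-one case being trivial) it carries an HN filtration with semistable graded pieces of strictly decreasing slopes $\mu_{1} > \cdots > \mu_{m}$; pulling this back along $V \twoheadrightarrow V/V_{1}$ and prepending $0 \subset V_{1}$ yields a filtration of $V$ with semistable graded pieces. To see the slopes are strictly decreasing at the top, let $W$ be the preimage of the first step of the filtration of $V/V_{1}$: then $s(W)$ is the weighted average of $s(V_{1}) = \mu_{\max}(V)$ and $\mu_{1}$, so $\mu_{1} \ge s(V_{1})$ would force $s(W) = \mu_{\max}(V)$ with $\rank W > \rank V_{1}$, contradicting the previous step. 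Hence $\lambda_{1} := s(V_{1}) > \mu_{1} =: \lambda_{2} > \cdots$.

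Finally I would prove uniqueness. Given any filtration as in the statement and any subbundle $W \subseteq V$, intersect with the $V_{i}$ and note that $(W\cap V_{i})/(W\cap V_{i-1})$ is a subsheaf of the semistable bundle $V_{i}/V_{i-1}$ of slope $\lambda_{i}$, hence has degree at most $\lambda_{i}$ times its rank; additivity of degree then gives
\[
\deg W = \sum_{i} \deg\!\big((W\cap V_{i})/(W\cap V_{i-1})\big) \le \sum_{i} \lambda_{i}\,\rank\!\big((W\cap V_{i})/(W\cap V_{i-1})\big) \le \lambda_{1}\,\rank W,
\]
with the last inequality an equality only if $W \subseteq V_{1}$. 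Thus $\lambda_{1} = \mu_{\max}(V)$ and $V_{1}$ is exactly the distinguished subbundle constructed above, so it depends on $V$ alone; replacing $V$ by $V/V_{1}$ and inducting on the rank forces the whole filtration to coincide. The only genuinely non-formal ingredient is the boundedness of degrees of line subbundles used at the start — the standard proof twists into the globally generated range and applies Riemann--Roch — and that is the step I expect to be the real work; everything else is bookkeeping with degrees, ranks, and saturations, where the single point to keep in mind is that replacing a subsheaf by its saturation can only increase the degree.
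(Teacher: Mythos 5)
Your proof is correct, but there is nothing in the paper to compare it against: the paper does not prove this proposition at all, it simply quotes the Harder--Narasimhan theorem with a reference to \cite{HN}, and only uses the resulting slopes in Theorem 1(b) and Proposition 2.12. What you wrote is the standard existence-and-uniqueness argument: establish that $\mu_{\max}(V)$ is finite and attained (your reduction to bounding degrees of line subbundles of $\wedge^{r}V$ is the usual one, and your sketch of that bound via embedding into a twist of a trivial bundle is fine), take $V_{1}$ of maximal slope and then maximal rank, verify it is semistable and contains every subbundle of slope $\mu_{\max}$ (your degree count on $V_{1}\cap V_{1}'$ and the saturation of $V_{1}+V_{1}'$ is correct, including the degenerate case $V_{1}\cap V_{1}'=0$), induct on the rank via $V/V_{1}$ with the weighted-average argument forcing strict decrease of slopes, and prove uniqueness by the estimate $\deg W\le\sum_{i}\lambda_{i}\rank\bigl((W\cap V_{i})/(W\cap V_{i-1})\bigr)$, using that a rank-zero subsheaf of a locally free sheaf vanishes. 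The only small elision is at the very end: to identify the first step of an arbitrary admissible filtration with your canonical $V_{1}$ you need both containments, one from your equality analysis (any slope-$\lambda_{1}$ subbundle lies in the filtration's first step) and one from the maximality property of the canonical $V_{1}$ established earlier; both are available, so this is harmless. In short: a complete and correct proof of a result the paper takes as a black box.
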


     This filtration is called the Harder-Narasimhan
     filtration of $V$ and $\lambda _{1}, \cdots , \lambda _{n}$ are
     called the Harder-Narasimhan slopes of $V$.
     If the base field is not algebraically closed, the Harder-Narasimhan slopes of $V$
     are defined as the Harder-Narasimhan slopes of pullback of the bundle along the
     base change to an algebraically closed field.
     When $\pi : X \to C$ is a semi-stable fibration of
     a proper smooth surface to a proper smooth curve over a
     subfield of $\C$, the semi-positivity theorem states that all the Harder-Narasimhan slope
     of $R^{1}\pi _{*} \mathcal{O}_{X}$ is non-positive. \cite{X},p.1
      \\ \\
     $\bf{Theorem \ 1.}$
     If $\pi: X \to C$ is generically ordinary semi-stable fibration, then
     \begin{itemize}
     \item[(a)] $\dim H^{0}(R^{1} \pi _{*} \mathcal{O} _{X}) = \dim H^{0} (R^{1} \pi ^{p^{n}} _{*} \mathcal{O}_{X^{p^{n}}})$ and $ \dim H^{1}(\mathcal{O}_{X})= \dim H^{1}(\mathcal{O}_{X^{p^{n}}})$ for any $n$.
     \item[(b)] All the Harder-Narasimhan slopes of $R^{1}\pi _{*}(\mathcal{O}_{X})$ are
     non-positive.
     \end{itemize}
      \begin{proof}
     Let $X^{p}$ be the base change of $X$ by the absolute Frobenius
     morphism of $C$. There is the Frobenius diagram of $X/C$,
     $$(2.1) \ \begin{diagram}
     X & \rTo ^{F_{X/C}} & X^{p} &
     \rTo & X \\
      & \rdTo _{\pi} & \dTo > {\pi ^{p}}& & \dTo >{\pi} \\
      & & C & \rTo ^{F_{C}} & C.\\
      \end{diagram}$$
      We have an exact sequence of coherent $\mathcal{O} _{X ^{p}}$-modules
      $$(2.2) \ 0 \to \mathcal{O}_{X^{p}} \to F_{X/C*}
      \mathcal{O}_{X} \to B^{1}\omega \to 0.$$
       The long exact sequence via $\pi ^{p}_{*}$ for $(2.2)$ is
      $$(2.3) \ 0 \to \mathcal{O}_{C} \simeq \mathcal{O}_{C} \to \pi ^{p}
      _{*} (B^{1}\omega) \to R^{1}\pi ^{p} _{*} (\mathcal{O} _{X^{p}}) \overset{F_{X/C}^{*}}{\to}
      R^{1} \pi _{*} (\mathcal{O}_{X}) \to R^{1} \pi^{p} _{*} (B^{1}\omega) \to 0.$$
      Since $\pi$ is generically ordinary, the restriction of $\pi
      ^{p} _{*} B^{1}\omega$ to the ordinary locus in $C$ is 0. But
      $B^{1}\omega$ is flat over $\mathcal{O}_{C}$, so $\pi ^{p} _{*} B^{1}\omega =0$.
      Hence in $(2.3)$, $F^{*}$ is injective and
      $$\dim H^{0}(R^{1}\pi ^{p} _{*} (\mathcal{O}_{X^{p}})) \leq \dim H^{0}(R^{1} \pi _{*} (\mathcal{O} _{X})).$$
      On the other hand, because
      $$H^{0}(R^{1} \pi^{p}_{*} (\mathcal{O}_{X^{p}})) =
      H^{0}(F_{C}^{*} R^{1} \pi _{*} \mathcal{O}_{X}) =
      H^{0}(R^{1} \pi _{*} (O_{X}) \otimes _{\mathcal{O}_{C}}
      F_{C*}(\mathcal{O}_{C}))$$
      and there is an injection $R^{1}
      \pi _{*} \mathcal{O}_{X} \hookrightarrow R^{1} \pi_{*} \mathcal{O}_{X}
      \otimes F_{C*}(\mathcal{O}_{C})$,
      $$\dim H^{0}(R^{1} \pi_{*} \mathcal{O}_{X}) \leq \dim \ H^{0}
      (F_{C}^{*} R^{1} \pi_{*} \mathcal{O}_{X}).$$
      Therefore
      $$\dim H^{0}(R^{1} \pi_{*} \mathcal{O}_{X}) = \dim \ H^{0}
      (F_{C}^{*} R^{1} \pi_{*} \mathcal{O}_{X}).$$
      Since
      $$\dim  H^{1}(\mathcal{O}_{X}) =
      \dim H^{1}(\mathcal{O}_{C}) +
      \dim H^{0}(R^{1} \pi_{*} \mathcal{O}_{X})$$
      and
      $$\dim H^{1}(\mathcal{O}_{X^{p}}) =
      \dim H^{1}(\mathcal{O}_{C}) +
      \dim H^{0}(R^{1} \pi^{p}_{*} \mathcal{O}_{X^{p}}),$$
      we have
      $$\dim  H^{1}(\mathcal{O}_{X})=\dim H^{1}(\mathcal{O}_{X^{p}}).$$
      We can apply this argument to
      the relative Frobenius morphism $F_{X^{p^{i}}/C} : X ^{p^{i}} \to X^{p^{i+1}}$ for any $i$,
      since $F_{X^{p^{i}}/C} : X^{p^{i}} \to X^{p^{i+1}}$ is the base change of the
      relative Frobenius morphism $F_{X/C} : X \to X^{p}$ by $F^{i} _{C} : C \to C$.
      Then by the induction, we have
      $$\dim H^{0}(R^{1} \pi_{*} \mathcal{O}_{X}) = \dim \ H^{0}
      (F_{C}^{n*} R^{1} \pi_{*} \mathcal{O}_{X})$$
      and
      $$\dim  H^{1}(\mathcal{O}_{X})=\dim H^{1}(\mathcal{O}_{X^{p^{n}}})$$
      for any $n$. This proves (a).
      Now assume $V$ is a subbundle of $R^{1} \pi _{*} \mathcal{O}_{X}$ with a positive degree $d>0$ and rank $r$.
      Then $F_{C} ^{n*} V$ is a subbundle of $F_{C}^{n*} R^{1} \pi _{*} \mathcal{O}_{X}$ with a positive
      degree $p ^{n}d$ and rank $r$. By the Rieman-Roch theorem on $C$,
      $\dim H^{0}(F_{C} ^{n*} V)$ diverges as $n$ goes to infinity.
      But this is a contradiction to the fact that $\dim H^{0}(F_{C} ^{n*} R^{1} \pi _{*} \mathcal{O}_{X})$
      is stable. Hence $R^{1} \pi _{*} \mathcal{O}_{X}$ does not have
      a subbundle of a positive degree, and all the Harder-Narasimhan slopes of $R^{1} \pi _{*} \mathcal{O}_{X}$
      are non-positive.
      \end{proof}
      \subsection{Triviality of slope 0 part of $R^{1}\pi _{*}\mathcal{O}_{X}$}
      In the proof of Theorem 1,  we actually proved that if $\pi$ is generically ordinary,
      $F_{C} ^{n*} R^{1} \pi _{*} \mathcal{O}_{X}$
      does not have a positive Harder-Narasimhan slope for any $n \in \N$. Hence
      the slope 0 part of $R^{1} \pi _{*} \mathcal{O}_{X}$ is strongly semi-stable. In fact, we can say
      more about the slope 0 part of $R^{1} \pi _{*} \mathcal{O}_{X}$. For convenience, we will use
      the following notations.
      \begin{defn}
      For a vector bundle $V$ on $C$, $V _{0}$ is the slope 0 part of $V$ and $V_{-}$ is the negative
      slope part of $V$.
      \end{defn}
      If $\pi$ is generically ordinary, there is the canonical filtration of $R^{1} \pi _{*} \mathcal{O}_{X}$,
      $$ 0 \to (R^{1} \pi _{*} \mathcal{O}_{X})_{0} \to R^{1} \pi _{*} \mathcal{O}_{X}
      \to (R^{1} \pi _{*} \mathcal{O}_{X})_{-} \to 0.$$
      \begin{defn}
      A vector bundle $V$ on $C$ is potentially trivial if there exists a finite \'{e}tale cover
      $f: D \to C$ such that $f ^{*} V$ is trivial.
      \end{defn}

      \begin{prop}
      If $\pi : X \to C$ is generically ordinary,
      $(R^{1} \pi
      _{*} \mathcal{O}_{X})_{0}$ is potentially trivial.
      \end{prop}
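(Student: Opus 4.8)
The plan is to reduce the statement to a single Frobenius-descent isomorphism $F_{C}^{*}W\cong W$ for $W:=(R^{1}\pi_{*}\mathcal{O}_{X})_{0}$, and then to invoke the theorem of Lange and Stuhler \cite{LS}: a vector bundle on a smooth projective curve over an algebraically closed field of characteristic $p$ is trivialized by a finite \'etale cover precisely when some iterate of the absolute Frobenius pulls it back to an isomorphic bundle. Since both the Harder--Narasimhan filtration and the property of being potentially trivial are insensitive to base change along $\Spec\bar k\to\Spec k$ (a finite \'etale cover and a trivialization defined over $\bar k$ descend to a finite separable, hence finite \'etale, extension because $k$ is perfect), I would first assume $k=\bar k$. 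Write $V=R^{1}\pi_{*}\mathcal{O}_{X}$. By Theorem 1(b) every Harder--Narasimhan slope of $V$ is $\le 0$; if the top slope is $<0$ then $W=0$ and the proposition is vacuous, so I assume it is $0$, in which case $W$ is the first step of the Harder--Narasimhan filtration of $V$, that is, the maximal semistable subbundle, and it is semistable of slope $0$.

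Next I would extract the Frobenius datum from the exact sequence $(2.3)$. Flat base change applied to the Cartesian square in $(2.1)$ identifies $R^{1}\pi^{p}_{*}\mathcal{O}_{X^{p}}$ with $F_{C}^{*}V$, and generic ordinarity gives $\pi^{p}_{*}B^{1}\omega=0$, so $(2.3)$ collapses to a short exact sequence
$$0\longrightarrow F_{C}^{*}V\stackrel{\alpha}{\longrightarrow} V\longrightarrow R^{1}\pi^{p}_{*}B^{1}\omega\longrightarrow 0$$
whose cokernel is torsion. Thus $\alpha$ exhibits $F_{C}^{*}V$ as a subsheaf of $V$ of full rank and degree $0$.

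Then I would transport this to $W$. The subbundle $F_{C}^{*}W\subseteq F_{C}^{*}V$ has degree $p\cdot\deg W=0$; let $\widetilde W\subseteq V$ be the saturation of its image under $\alpha$, a subbundle of rank $r:=\rank W$. Saturating cannot decrease degree, so $\deg\widetilde W\ge 0$; on the other hand every subbundle of $V$ has slope $\le 0$ by Theorem 1(b), so $\deg\widetilde W\le 0$ and hence $\deg\widetilde W=0$. A subbundle of $V$ of degree $0$ is automatically semistable of slope $0$: its top Harder--Narasimhan slope is $\le 0$ because it is also a subbundle of $V$, and a single strictly negative slope anywhere in its filtration would make the total degree negative. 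Such a bundle admits no nonzero map to the quotient $V/W$, all of whose slopes are $<0$; therefore $\widetilde W\subseteq W$. Since $\widetilde W$ and $W$ have the same rank and the same degree, $\widetilde W=W$, so $\alpha$ restricts to an injection $F_{C}^{*}W\hookrightarrow W$ of vector bundles of equal rank and equal degree on a smooth curve, which is necessarily an isomorphism (its determinant is a nonzero map between line bundles of the same degree, hence an isomorphism). This produces $F_{C}^{*}W\cong W$, and Lange--Stuhler then supplies a finite \'etale cover $f:D\to C$ with $f^{*}W$ trivial.

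The only external input is the Lange--Stuhler theorem; everything else is bookkeeping with Harder--Narasimhan slopes. I expect the one point requiring genuine care to be the verification that $F_{C}^{*}W$, which a priori is only a degree-$0$ subsheaf of $V$ coming from $\alpha$, actually lands inside the slope-$0$ part $W$ and fills it out — but this is exactly where the non-positivity of all Harder--Narasimhan slopes of $V$ (Theorem 1(b)) does the work, so no new ingredient beyond the main theorem is needed. The reduction to $k=\bar k$ at the start is routine and could alternatively be avoided by running the descent argument directly over $k$.
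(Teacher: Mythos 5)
Your proposal is correct and follows essentially the same route as the paper: both use the injection $F_{C}^{*}R^{1}\pi_{*}\mathcal{O}_{X}\hookrightarrow R^{1}\pi_{*}\mathcal{O}_{X}$ obtained in the proof of Theorem 1 together with slope bookkeeping to produce the isomorphism $F_{C}^{*}(R^{1}\pi_{*}\mathcal{O}_{X})_{0}\simeq (R^{1}\pi_{*}\mathcal{O}_{X})_{0}$, and then conclude by the criterion that a Frobenius-periodic bundle is trivialized by a finite \'etale cover, which you quote as the Lange--Stuhler theorem and the paper proves itself as Lemma 2.12 via \v{C}ech cocycles and Lang's theorem. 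The only blemishes are cosmetic: your unused remark that $F_{C}^{*}V$ is a subsheaf ``of degree $0$'' (its degree is $p\deg V$, generally negative), while your saturation argument showing the image lands in the slope-$0$ part is just a more explicit variant of the paper's appeal to the strong semistability of that slope-$0$ part.
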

      \begin{proof}
      In the proof of Theorem 1, we saw that
      $$ F_{C}^{*} R^{1} \pi _{*} \mathcal{O}_{X} \hookrightarrow
      R^{1} \pi _{*} \mathcal{O}_{X}.$$
      On the other hand, in the canonical exact sequence,
      $$ 0 \to (R^{1} \pi _{*} \mathcal{O}_{X})_{0}  \to R^{1} \pi _{*} \mathcal{O}_{X} \to
      (R^{1} \pi _{*} \mathcal{O}_{X})_{-} \to 0,$$
      $(R^{1} \pi _{*} \mathcal{O}_{X}) _{-}$ is an iterative extension of
      semi-stable vector bundles of negative slopes. Therefore the image of the
      composition
      $$F^{*} (R^{1} \pi _{*} \mathcal{O}_{X})_{0} \hookrightarrow F^{*} R^{1} \pi _{*}
      \mathcal{O}_{X} \hookrightarrow R^{1} \pi _{*}
      \mathcal{O}_{X}$$
      is contained in $(R^{1} \pi _{*} \mathcal{O}_{X})_{0}$. Since $(R^{1} \pi _{*} \mathcal{O}_{X})_{0}$
      and $F^{*} (R^{1} \pi _{*} \mathcal{O}_{X})_{0}$ are of the same rank and
      the same degree, they should be isomorphic. Now it's enough to show the
      following lemma. The author learned this lemma from K.Joshi. The
      original source of this fact is apparently a letter from Mumford to Seshadri.
      \begin{lem}
      If $M$ is a vector bundle on $C$ such that $F^{*}M
      \simeq M$ then $M$ is potentially trivial.
      \end{lem}
      \begin{proof}
      Let $r$ be the rank of $M$. $M$ is a class in
      $H^{1}_{et}(GL_{r}(\mathcal{O}_{C}))$. If we can show that $M$
      is actually in $H^{1}_{et}(GL_{r}(\mathbb{F}_{p}))$, it corresponds
      to a group homomorphism $\pi _{1}(C) \to GL_{r}(\mathbb{F}
      _{p})$. Since $GL_{r}(\mathbb{F} _{p})$ is a finite group, we
      have a finite \'{e}tale Galois cover $f:D \to C$, corresponding
      to the kernel of this group homomorphism. Then $f^{*} M$ is the trivial vector
      bundle of rank r on $D$.

      We proceed to prove $M$ is a class in $H^{1}(GL_{r}(\mathbb{F} _{p}))$.
      Let $\{(U_{i}, g_{ij})\}$ be a trivialization of $M$ and the
      transition functions. Then $\{(U_{i} ,
      g_{ij}^{(p)})\}$ is a trivialization of $F^{*}M$ and transition functions.
      Here $g_{ij}^{(p)}$ is the matrix obtained by taking the $p$-power of each entry of $g_{ij}$.
      From the
      assumption, after replacing $U_{i}$ by a refinement if
      necessary, there are $h_{i} \in GL_{r}(\mathcal{O}_{C})(U_{i})$, such that
      $$h_{i} g^{(p)}_{ij} h_{j}^{-1} = g_{ij}.$$
      To show $M \in H^{1}(GL_{r}(\mathbb{F} _{p}))$, we should
      find $k_{i} \in GL_{r}(\mathcal{O}_{C}) (U_{i})$ such that
      $$(k_{i}g_{ij}k_{j}^{-1})^{(p)} = k_{i}^{(p)} g_{ij}^{(p)}
      (k_{j}^{-1})^{(p)} = k_{i}g_{ij} k_{j}^{-1}.$$
      This is equivalent to
      $$k_{i}^{-1}k_{i}^{(p)} g_{ij}^{(p)} (k_{j}^{-1})^{(p)} k_{j} =
      g_{ij}.$$
      Hence the problem is reduced to find $k_{i}$ satisfying $k_{i}^{-1}
      k_{i}^{(p)} = h_{i}$. Since it is local in the \'{e}tale topology,
      it is enough to find $k_{i}$ in the strict henselization for a geometric point
      of $C$. Using Hensel's lemma, it is again reduced to the
      same problem over an algebraically closed field of characteristic
      $p$. But by Lang's Theorem\cite{SP},p.76,
      $$ A \mapsto A^{-1}A^{(p)}$$
      is surjective on $GL_{r}(\bar{k})$. Hence there exists $k_{i}$ satisfying
      $k_{i} ^{-1} k_{i} ^{(p)}= h_{i}$.
      This proves the claim.
      \end{proof}
      \end{proof}
      \begin{rem}
      It is natural to expect that if $\pi : X \to C$ is defined over a field of characteristic 0,
      $(R^{1} \pi _{*} \mathcal{O}_{X})_{0}$ is potentially trivial.
      But for this problem, we can't apply the standard reduction argument directly.
      One reason
      is that in the reduction situation,
      we don't know whether there are infinitely many places at which the reduction is generically ordinary.
      This obstruction is related to Serre's ordinary reduction conjecture.
      \end{rem}

      \section{Counterexample to Parshin's conjecture}
      In this section, we will construct a counterexample to Parhin's conjecture.
           Let us recall the construction of a counterexample to the Miyaoka-Yau inequality
      over a field of positive characteristic
      from \cite{SZ1}. Let $k$ be a perfect field of positive characteristic.
      $\pi : X \to C$ is a smooth non-isotrivial fibration of a proper smooth surface
      to a proper smooth curve over $k$ of fiber genus $g \geq 2$ and of base genus $q \geq 2$.
      Also set $d = - \deg R^{1} \pi _{*} \mathcal{O}_{X} > 0$. Then
      $$c_{1}^{2}(X) = 12d + 8(q-1)(g-1)\textrm{ and }c_{2}(X)= 4(q-1)(g-1).$$
      When $\pi ^{p^{n}} : X^{p^{n}} \to C$ is the base change of $\pi$ by the $n$-iterative
      Frobenius morphism of $C$, $F^{n}_{C} : C \to C$,
      $\deg R^{1} \pi ^{p^{n}} _{*} \mathcal{O} _{X ^{p^{n}}} = -p^{n}d$ and
      $$c_{1}^{2}(X^{p^{n}}) = 12dp^{n} + 8(q-1)(g-1)\textrm{ and }c_{2}(X)= 4(q-1)(g-1).$$
      For any $M>0$, if $n$ is sufficiently large,
      $$c_{1} ^{2} (X^{p^{n}}) > M c_{2} (X^{p^{n}}).$$
      \begin{lem}
      Suppose that $X$ is a smooth proper surface over $k$ which admits a smooth fibration,
      $\pi : X \to C$, to a smooth proper curve $C$ over $k$. If $\pi$ is
      generically ordinary and $\Pic X$ is smooth, then
      $\Pic X^{p^{n}}$ is smooth for any $n \in  \N$ when $X^{p^{n}} \to C$ is the
      base change of $X \to C$ by $n$-iterative Frobenius morphism $F_{C}^{n} : C \to C$.
      \end{lem}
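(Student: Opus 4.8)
The plan is to reduce the smoothness of $\Pic X^{p^n}$ to one numerical equality. Recall the standard criterion: for a smooth proper surface $Y$ over the perfect field $k$ the Picard scheme $\Pic_{Y/k}$ is a group scheme whose tangent space at the identity is $H^1(Y,\mathcal{O}_Y)$, so one always has $\dim\Pic Y\le\dim_k H^1(Y,\mathcal{O}_Y)$, with equality precisely when $\Pic_{Y/k}$ is smooth. A smooth fibration is in particular a semi-stable fibration, and $\pi$ is generically ordinary, so part (a) of Theorem 1 applies and gives $\dim_k H^1(X^{p^n},\mathcal{O}_{X^{p^n}})=\dim_k H^1(X,\mathcal{O}_X)$; the hypothesis that $\Pic X$ is smooth gives $\dim_k H^1(X,\mathcal{O}_X)=\dim\Pic X$. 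Hence it suffices to prove
\[
\dim\Pic X^{p^n}=\dim\Pic X ,
\]
for then $\dim\Pic X^{p^n}=\dim_k H^1(X^{p^n},\mathcal{O}_{X^{p^n}})$, which forces $\Pic X^{p^n}$ to be smooth.

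To prove this equality I would sandwich it between two inequalities coming from the two finite flat purely inseparable morphisms joining $X$ and $X^{p^n}$. On one side, the composite of relative Frobenius morphisms $X=X^{p^{0}}\to X^{p}\to\cdots\to X^{p^n}$ is finite and flat of degree $p^n$, because each $X^{p^i}$ is a smooth surface and the relative Frobenius over $C$ of a smooth morphism of relative dimension $1$ is finite flat of degree $p$. On the other side, the first projection $\mathrm{pr}_1\colon X^{p^n}=X\times_{C,F_C^n}C\to X$ is finite and flat of degree $p^n$, being the base change of the finite flat morphism $F_C^n\colon C\to C$. For any finite flat morphism $\phi\colon Y'\to Y$ of smooth proper $k$-surfaces, of degree $m$, the norm map satisfies $\mathrm{Nm}_\phi\circ\phi^{*}=[m]$ on $\Pic^{0}$; since the kernel of $[m]$ on $\Pic^{0} Y$ is finite, so is the kernel of $\phi^{*}\colon\Pic^{0} Y\to\Pic^{0} Y'$, and hence $\dim\Pic Y=\dim\phi^{*}(\Pic^{0} Y)\le\dim\Pic Y'$. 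Applying this to the composite Frobenius morphism yields $\dim\Pic X^{p^n}\le\dim\Pic X$, and applying it to $\mathrm{pr}_1$ yields $\dim\Pic X\le\dim\Pic X^{p^n}$; together these give the equality above.

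The genuinely substantive ingredient is part (a) of Theorem 1, which is where generic ordinarity enters: it is exactly the statement that passing to the Frobenius twist $X^{p^n}$ does not change $h^1(\mathcal{O})$, and without it the conclusion is false in general (the numerical invariants of $X^{p^n}$ do move for non-ordinary fibrations). The remaining steps — the criterion for smoothness of the Picard scheme, and the invariance $\dim\Pic X^{p^n}=\dim\Pic X$ of the Picard dimension under the purely inseparable Frobenius base change — are routine; the only point to check carefully is that all the $X^{p^i}$ are smooth, so that finiteness and flatness of the relative Frobenii and of $\mathrm{pr}_1$ are automatic, and here it matters that $\pi$ is a smooth fibration rather than merely semi-stable. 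Putting it all together, the chain
\[
\dim\Pic X^{p^n}\le\dim_k H^1(X^{p^n},\mathcal{O}_{X^{p^n}})=\dim_k H^1(X,\mathcal{O}_X)=\dim\Pic X=\dim\Pic X^{p^n}
\]
collapses to equalities throughout, so $\Pic X^{p^n}$ is smooth.
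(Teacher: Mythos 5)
Your proof is correct, and it reaches the conclusion by the same overall skeleton as the paper (Theorem 1(a) plus the tangent-space criterion $\dim\Pic Y\le\dim_k H^1(\mathcal{O}_Y)$ with equality iff $\Pic_{Y/k}$ is smooth), but the key intermediate step $\dim\Pic X^{p^n}=\dim\Pic X$ is established by a genuinely different route. The paper gets this equality from crystalline cohomology: Frobenius induces a bijective semi-linear map on $H^1_{cris}(X/K)$, so $\dim H^1_{cris}(X/K)=\dim H^1_{cris}(X^p/K)$, and then the comparison with $\ell$-adic cohomology together with $b_1=2\dim\Pic$ gives $\dim\Pic X=\dim\Pic X^p$, iterated for each $n$. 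You instead exploit the two finite flat purely inseparable morphisms linking $X$ and $X^{p^n}$ (the composite relative Frobenius and the projection $\mathrm{pr}_1$) and the norm identity $\mathrm{Nm}_\phi\circ\phi^*=[m]$, so that each pullback has finite kernel on $\Pic^0$ and the dimensions sandwich to equality; in fact only the inequality $\dim\Pic X\le\dim\Pic X^{p^n}$ coming from $\mathrm{pr}_1$ is needed, since the reverse inequality already follows from $\dim\Pic X^{p^n}\le h^1(\mathcal{O}_{X^{p^n}})=h^1(\mathcal{O}_X)=\dim\Pic X$. Your argument is more elementary and self-contained (no crystalline or $\ell$-adic input), at the cost of two small points you should make explicit: $\mathrm{pr}_1$ is a morphism over the $n$-th power of the Frobenius of $k$ rather than over $k$, so the induced map of Picard schemes is only semi-linear, which is harmless for dimensions since $k$ is perfect; and, as you note, smoothness of $\pi$ is what makes all the $X^{p^i}$ smooth and the relative Frobenii finite flat. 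The paper's cohomological route, by contrast, generalizes immediately to statements about all Betti numbers under Frobenius base change, which is the extra information it buys.
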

      \begin{proof}
      Recall the Frobenius diagram
      $$\begin{diagram}
     X & \rTo ^{F_{X/C}} & X^{p} &
     \rTo ^{\alpha} & X \\
     & \rdTo & \dTo & & \dTo \\
     & & C & \rTo ^{F_{C}} & C.
     \end{diagram}$$
      Here $\alpha \circ F_{X/C}$ is the absolute Frobenius morphism
      of $X$ and $F_{X/C} \circ \alpha$ is the absolute Frobenius morphism of
      $X^{p}$. Since $\pi$ is smooth, $X^{p}$ is smooth over $k$. For a smooth proper variety,
      the Frobenius morphism induces a bijective
      semi-linear morphism on the rational crystalline cohomologies. Therefore
      $$ \dim H^{i}_{cris} (X/K)=\dim H^{i}_{cris} (X^{p}/K).$$
      Here $K$
      is the fraction field of the ring of Witt vectors $W=W(k)$
      and $H^{i}_{cris}(X/K)=H^{i}_{cris}(X/W) \otimes K$.  In
      particular,
      $$\dim H^{1}_{cris} (X/K)=\dim H^{1}_{cris} (X^{p}/K).$$
      On the
      other hand, the $K$-dimension of the crystalline cohomology $H^{i}_{cris} (X/K)$ is
      equal to the $\Q _{l}$-dimension of the $l$-adic \'{e}tale
      cohomology $H^{i}_{et} (\bar{X},\Q _{l})$, where $l$ is a prime number
      different from the characteristic of $k$ and $\bar{X} = X \times _{k} \bar{k}$.
      The dimension of $H^{1}_{et} (\bar{X},\Q _{l})$ is the twice
      of the dimension of $\Pic X$, so
      $$\dim \Pic X = \dim \Pic X^{p}.$$
      $\Pic X$ is smooth if and only
      if the dimension of $\Pic X$ is equal to the $k$-dimension of
      $H^{1}(\mathcal{O}_{X})$. Since $\pi$ is generically
      ordinary, by Thm 1.(a),
      $$\dim H^{1}(\mathcal{O}_{X})=\dim H^{1}(\mathcal{O}_{X^{p}}).$$
      Hence if $\Pic X$ is smooth, $\Pic X^{p}$ is smooth.
      By the same argument, $\Pic X^{p^{n}}$ is smooth for any $n$.
      \end{proof}
      \begin{cor}
      For any $M>0$, there is a smooth proper surface of general type $X$ over a
      finite field whose Picard scheme is smooth and
      $c_{1}^{2}(X) > M c_{2}(X)$.
      \end{cor}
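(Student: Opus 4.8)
The plan is to start from one well-chosen smooth non-isotrivial fibration over a finite field and then feed it through the Frobenius base-change machinery already set up in this section.

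First I would fix a smooth non-isotrivial fibration $\pi : X \to C$ over a finite field $k$, of fibre genus $g \geq 2$ over a base of genus $q \geq 2$, which is in addition generically ordinary and has $\Pic X$ smooth; granting such an example, the rest is essentially bookkeeping. Since $\pi$ is non-isotrivial, $d := -\deg R^{1}\pi_{*}\mathcal{O}_{X} > 0$ by Szpiro's Proposition 2.4. Forming the base changes $\pi^{p^{n}} : X^{p^{n}} \to C$ along $F_{C}^{n}$, each $\pi^{p^{n}}$ is again smooth (base change of a smooth morphism), so $X^{p^{n}}$ is again a smooth proper surface carrying a smooth non-isotrivial genus-$g$ fibration over $C$; such a surface is of general type (and indeed minimal, having no $(-1)$-curve, since a rational curve cannot dominate $C$ and the fibres are smooth of genus $\geq 2$), and it is defined over a finite field. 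By Lemma 3.1, $\Pic X^{p^{n}}$ is smooth for every $n$. Finally the Chern-number formulas recalled at the beginning of this section give $c_{1}^{2}(X^{p^{n}}) = 12 p^{n} d + 8(q-1)(g-1)$ and $c_{2}(X^{p^{n}}) = 4(q-1)(g-1)$, so for $n$ large enough (namely $p^{n} > (M-2)(q-1)(g-1)/(3d)$) the surface $X^{p^{n}}$ satisfies $c_{1}^{2} > M c_{2}$, which is the assertion of the Corollary.

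The crux, and the step I expect to be the main obstacle, is producing the base example with \emph{both} extra properties at once. I would build it by an Atiyah-Kodaira-type construction: take a curve $D$ of large genus over $\bar{\mathbb{F}}_{p}$, an \'etale cover $C \to D$, and a cyclic cover $X$ of $D \times C$ branched along a divisor in general position (a combination of the diagonal and graphs of suitably chosen maps), exactly as one produces complete families of smooth curves in characteristic zero; non-isotriviality and the genus bounds come from making the covering data non-constant. Generic ordinarity then amounts to the Hasse-Witt determinant of the cyclic cover being not identically zero on $C$, which one checks either by an explicit Hasse-Witt computation for one choice of the branch data, or by a dimension count confining the non-ordinary members of the family to a proper closed subset. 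Smoothness of $\Pic X$ I would arrange via the Leray identity $h^{1}(\mathcal{O}_{X}) = q + h^{0}(C, R^{1}\pi_{*}\mathcal{O}_{X})$ together with Deligne's $E_{2}$-degeneration, which gives $\tfrac{1}{2} b_{1}(X) = q + \tfrac{1}{2}\dim H^{0}(C, R^{1}\pi_{*}\mathbb{Q}_{\ell})$: it suffices that the family have no fixed part and that $R^{1}\pi_{*}\mathcal{O}_{X}$ have strictly negative Harder-Narasimhan slopes, equivalently (by Theorem 1(b)) that its slope-$0$ part vanish, and both are open conditions satisfied for a sufficiently general choice of the covering data, the second being the positive-characteristic analogue of the classical fact that a non-isotrivial family with no fixed part has $\pi_{*}\omega_{X/C}$ of strictly positive slopes. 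Once such $\pi : X \to C$ is found over $\bar{\mathbb{F}}_{p}$, all the data are of finite type, so the whole configuration descends to a finite field $\mathbb{F}_{q_{0}}$; generic ordinarity and smoothness of $\Pic$ are unaffected (they may be checked after base change to $\bar{\mathbb{F}}_{p}$), and Lemma 3.1 together with the Chern-number estimate above then applies as stated.
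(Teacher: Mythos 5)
Your first paragraph reproduces the paper's own reduction: granted one smooth non-isotrivial, generically ordinary fibration over a finite field with smooth Picard scheme, Lemma 3.1 plus the Chern-number formulas $c_{1}^{2}(X^{p^{n}})=12p^{n}d+8(q-1)(g-1)$, $c_{2}=4(q-1)(g-1)$ give the Corollary. That part is fine and is exactly what the paper does. The entire content of the Corollary, however, is the existence of the base example, and there your proposal has genuine gaps rather than a proof. First, generic ordinarity of the proposed cyclic-cover (Atiyah--Kodaira type) family is only asserted: the ``explicit Hasse--Witt computation for one choice of branch data'' is not carried out, and the alternative ``dimension count'' is not a valid argument --- the family is a curve in $\mathcal{M}_{g}$ and nothing prevents such a curve from lying entirely inside the non-ordinary divisor (Moret-Bailly's family, recalled in the introduction, is precisely a fibration all of whose fibers are non-ordinary). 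Exhibiting a single ordinary fiber is the hard point, and it is left open. Second, your sufficient criterion for smoothness of $\Pic X$ (no fixed part plus strictly negative Harder--Narasimhan slopes of $R^{1}\pi_{*}\mathcal{O}_{X}$) is incompatible with the construction you propose: in a Kodaira-type family every fiber is a branched cover of a fixed curve, so its Jacobian contains the Jacobian of that fixed curve, i.e.\ the family automatically has a nontrivial fixed part and $H^{0}(C,R^{1}\pi_{*}\mathbb{Q}_{\ell})\neq 0$; no ``general choice of covering data'' removes this. (Moreover the appeal to Theorem 1(b) presupposes the generic ordinarity that is itself unestablished.) So the existence step is not proved.

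For comparison, the paper sidesteps all of this by a characteristic-zero reduction: it takes the Fermat curve $F_{m}$ ($m>3$), uses Harris's theorem to find a complete curve in $\mathcal{M}_{g}$ through the Fermat point, hence a non-isotrivial smooth fibration $X\to C$ over $\C$ with a Fermat fiber, spreads it out over a ring of integers, and reduces at places $v$ with $p\equiv 1 \bmod m$, where Yui's criterion makes the Fermat fiber (hence the reduced fibration, generically) ordinary; smoothness of the Picard scheme of the reduction holds at almost all places because it holds in characteristic zero. If you want to salvage your direct positive-characteristic construction, you would need (i) an actual exhibited ordinary member of your family, and (ii) a smooth-Pic argument that accommodates the unavoidable fixed part, e.g.\ by checking that the fixed part contributes equally to $h^{0}(R^{1}\pi_{*}\mathcal{O}_{X})$ and to $\tfrac12\dim H^{0}(R^{1}\pi_{*}\mathbb{Q}_{\ell})$, neither of which is in the current write-up.
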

      \begin{proof}
      By the above lemma, it is enough to give a non-isotrivial generically ordinary smooth
      fibration $X \to C$ such that $\Pic (X)$ is smooth. We will construct
      such an example by a reduction argument.

      Let $F_{m}$ be the Fermat curve $x^{m} + y^{m} +z^{m}=0$
      over $\C$, with $m>3$. Denote the
      genus of $F_{m}$ by $g$ and notice $g \geq 3$. Let $\mathcal{M}_{g}$ be the moduli space of
      smooth proper curves of genus $g$ over $\C$. By \cite{HA},p.105,
      there exists a smooth proper curve $C_{0}$ in
      $\mathcal{M} _{g}$ passing through the point representing
      $F_{m}$. Then
      there is a finite cover $C \to C_{0}$ and a non-isotrivial
      smooth fibration $\pi : X \to C$ which induces the composition $C
      \to C_{0} \hookrightarrow \mathcal{M} _{g}$. Let us choose $s \in C$
      such that $X_{s} = X \times _{C} k(s)= F_{m}$.
      We can take an integral model of $\pi$ with the section $s$ over a noetherian
      domain of finite type over $\Z$.
      Explicitly, we can take $A$, an integral domain of finite type over
      $\Z$,
      and a smooth fibration $\pi _{A}: X_{A} \to C_{A}$
      over $\Spec A$ satisfying
      \begin{enumerate}
      \item $X_{A}$ and $C_{A}$ are smooth and proper over
      $\Spec A$.
      \item There is a geometric generic point of $\eta : \C \to \Spec A$
      such that $\pi _{A} \times  _{A} \eta$ is isomorphic to $\pi : X \to C$.
      \item There exists a section $S:\Spec A \to C_{A}$ such that
      $S \times _{A} \eta$ corresponds to $s$ with respect to the isomorphism in 2.
      \item $S \times _{C_{A}} X_{A}$ is isomorphic to
      the Fermat curve over $\Spec A$.
      \end{enumerate}
      Since $\Spec A$ is a scheme of finite type over $\Z$, there is a
      rational point of $\Spec A$ over a number field $F$. Considering
      the coordinates of this rational point, there is a
      morphism $\Spec B \to \Spec A$, where $B$ is a localization
      of $ \mathcal{O}_{F}$, the ring of integers of $F$.
      By the bases change, we obtain a
      smooth fibration $\pi _{B} : X _{B} \to
      C _{B}$ over $\Spec B$. Then for a place $\upsilon
      \in \Spec B$, the fiber of $\pi _{\upsilon}= \pi _{B} \times k_{\upsilon} $ over
      $S_{\upsilon}$ is the Fermat curve
      over the residue field of $\upsilon$.
      The ordinarity of the Fermat curve over
      a finite field depends only on the characteristic of the
      field. To be precise, it is ordinary if and only if
      $p \equiv
      1 \textrm{ mod }m$ where $p$ is the characteristic of the finite field.\cite{Y}
      Hence at infinitely many places of $\Spec B$,
      the reduction of $\pi$ is generically ordinary.
      Because $\Pic X_{\upsilon}$ is smooth for almost all $\upsilon \in \Spec B$,
      there is a place $\upsilon \in \Spec B$ such that $\pi _{\upsilon}$ is
      generically ordinary and the Picard scheme of $X
      _{\upsilon}$ is smooth.
      \end{proof}
      \begin{rem}
      In the above example, while $\textrm{dim }H^{1}(\mathcal{O}_{X^{p^{n}}})$ is stable
      by the Frobenius base change, $\textrm{dim }H^{0}(\Omega ^{1} _{X^{p^{n}}})$ is strictly increasing\cite{FO},p.94. Considering the inequality
      $$ c_{1}^{2} \leq 5c_{2} + 6\beta _{1} + 6(2h^{1,0}-\beta _{1}),$$
      it seems that the Miyaoka-Yau inequality is related to
      the ``correctness'' of $h^{1,0}$ rather than that of $h^{0,1}$. The example
      we have constructed shows that the ``correct value'' of $h^{0,1}$ does not guarantee the Miyaoka-Yau inequality.
      \end{rem}

     \end{document}